
\documentclass[10pt]{amsart}
\usepackage{amssymb, amstext, amscd, amsmath}
\usepackage[all]{xy} 

\makeatletter
\def\@cite#1#2{{\m@th\upshape\bfseries%
[{#1\if@tempswa{\m@th\upshape\mdseries, #2}\fi}]}}
\makeatother


\theoremstyle{plain}
\newtheorem{thm}{Theorem}[section]

\newtheorem{lem}[thm]{Lemma}

\theoremstyle{definition}



  \newcommand{\A}{{\mathcal{A}}}
  \newcommand{\B}{{\mathcal{B}}}
  \newcommand{\C}{{\mathcal{C}}}
  \newcommand{\D}{{\mathcal{D}}}

  \newcommand{\R}{{\mathcal{R}}}


\renewcommand{\phi}{\varphi}
\newcommand{\upchi}{{\raise.35ex\hbox{\ensuremath{\chi}}}}








\newcommand{\alg}{\operatorname{alg}}

\newcommand{\Prim }{\operatorname{Prim}}


\newcommand{\ca}{\mathrm{C}^*}

\begin{document}
\title[Jacobson spectrum]{Piecewise conjugacy for multivariable dynamics over the Jacobson spectrum of a $\ca$-algebra}

\author[E.G.Katsoulis]{Elias~G.~Katsoulis}
\address{Dept. Math.\\ East Carolina University\\
Greenville, NC 27858\\USA}
\email{katsoulise@ecu.edu}

\begin{abstract}
We show that if $(\A , \alpha)$ and $(\B, \beta)$ are automorphic multivariable $\ca$-dynamical systems with isometrically isomorphic tensor algebras (or semi crossed products), then the systems are piecewise conjugate over their Jacobson spectrum. This answers a question of Kakariadis and the author.
\end{abstract}

\thanks{2000 {\it  Mathematics Subject Classification.}
47L55, 47L40, 46L05, 37B20}
\thanks{{\it Key words and phrases:} $\ca$-algebra, multivariable dynamical system, piecewise conjugacy, spectrum}

\date{}
\maketitle


\section{Introduction}        \label{Intro}

The concept of a non-selfadjoint operator algebra associated with a multivariable dynamical system is new and yet fruitfull. Such algebras appeared for the first time in the work of Power \cite{Po} and Donsig, Katavolos and Manoussos \cite{DKM} but their systematic study only started recently with the Memoirs of Davidson and the author \cite{DK}. In that paper, given a multivariable dynamical system over a locally compact Hausdorff space, we isolated two associated operator algebras, the tensor algebra and the semi crossed product, and we made the case that various properties of the dynamical system are encoded in these algebras. Rather surprisingly, the classification of such algebras in \cite {DK} has had an impact beyond operator algebras, as witnessed in the work of Cornelissen and Marcolli \cite{CM} on class field theory.

Inspired by \cite{CM, DK}, Kakariadis and the author \cite{KK} extended the study of multivariable dynamics beyond commutative $\ca$-algebras. It turns out that the non-commutative context allows for questions that do not materialize in the commutative setting. One such question revolves around the various notions of a spectrum for a $\ca$-algebra.  In \cite{KK} we showed that  if $(\A , \alpha)$ and $(\B, \beta)$ are multivariable dynamical systems with isometrically isomorphic tensor algebras (or semi crossed products), then the systems are piecewise conjugate over the spectrum, as described by Ernest in \cite{Ern}. However the conjugacy over the Jacobson spectrum, i.e., the primitive ideal space with the hull-kernel topology, was left open and it was asked as a question at the end of the paper \cite[Question 3]{KK}. In this note we observe that the presence of a continuous open map between the spectra, combined with the results of \cite{KK} settles this question in the affirmative.

\section{The main result}
We adhere to the notation of \cite{KK}  and use as references for the properties of the various spectra of $\ca$-algebras the book of Dixmier \cite{Dix}, and Ernest's paper \cite{Ern}.

If $\A$ is a $\ca$-algebra, then $\Prim (\A)$ will denote its Jacobson spectrum. Let $\hat{\A}$ be the collection of all (equivalence classes of non-trivial) irreducible representations of $\A$ and
\begin{equation} \label{theta}
\theta : \hat{\A} \rightarrow \Prim (\A); \hat{\A} \ni \pi \mapsto \ker \pi.
\end{equation}
The space $\hat{\A}$ is equipped with the smallest topology that makes $\theta$ continuous. This forces $\theta$ to be an open mapping as well.

In \cite{KK} we worked exclusively with J. Ernest's picture for the spectrum for a $\ca$-algebra. Let $\R(\A)$ be the collection of all railway representations of $\A$; these are representations equivalent to appropriately large ampliations of irreducible representations of $\A$, all acting on the same Hilbert space. The space $\R(\A)$ is equipped with the topology of pointwise-SOT convergence. If $\phi : \R(\A) \rightarrow \hat{\A}$ is the map that associates a railway representation to the equivalence class of its associated irreducible representation, then \cite{Ern} shows that $\phi$ is continuous and open.

\begin{lem} \label{elem}
Let $X, Y, Z$ be topological spaces and $f, g, h$ maps so that the following diagram
\[
\xymatrix{X \ar[r]^f \ar[rd]_g  & Y \ar[d]^h \\
                           &Z}
\]
commutes. Assume that $f$ is continuous and open, $Z$ is equipped with the quotient topology for $g$ and $h$ is a bijective surjection. Then $h$ is a homeomorphism.
\end{lem}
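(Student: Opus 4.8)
The plan is to show that the bijection $h$ and its inverse $h^{-1}$ are both continuous, so that $h$ is a homeomorphism. The whole argument runs through the universal property of quotient maps: if $q\colon X\to Q$ is a surjection and $Q$ carries the final topology determined by $q$, then a map $k\colon Q\to W$ into any topological space $W$ is continuous if and only if $k\circ q$ is continuous. (Recall also that a continuous open surjection automatically carries the final topology, i.e.\ is a quotient map, since for such a $q$ one has $q\bigl(q^{-1}(U)\bigr)=U$ for every $U\subseteq Q$.)

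First I would record two preliminary facts. Since $g=h\circ f$ is surjective — this being implicit in the hypothesis that $Z$ carries the quotient topology for $g$ — and $h$ is injective, the map $f$ is surjective; being also continuous and open, $f$ is then a quotient map, so $Y$ carries the final topology determined by $f$. Secondly, $g$ is a quotient map as well: it is surjective and, by hypothesis, $Z$ carries its final topology; in particular $g$ is continuous.

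Continuity of $h$ then follows by applying the universal property to the quotient map $f\colon X\to Y$ and the map $h\colon Y\to Z$: indeed $h\circ f=g$ is continuous, hence so is $h$. Continuity of $h^{-1}$ follows by applying the universal property to the quotient map $g\colon X\to Z$ and the map $h^{-1}\colon Z\to Y$: here $h^{-1}\circ g=h^{-1}\circ h\circ f=f$, which is continuous, hence $h^{-1}$ is continuous. Therefore $h$ is a homeomorphism.

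No step is a genuine obstacle; the argument is a formal manipulation with the universal property. The one point that really matters is the surjectivity of $g$, and hence of $f$, exploited above: without it $f$ need not be a quotient map and the conclusion fails (a two-point Sierpi\'nski-space example makes this explicit), so it is essential that ``quotient topology for $g$'' be read as including surjectivity of $g$ — which is of course the case in every application in this paper.
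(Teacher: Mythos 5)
Your proposal is correct and takes essentially the same route as the paper: continuity of $h$ comes from the fact that the continuous open surjection $f$ is a quotient map, and continuity of $h^{-1}$ (equivalently, openness of $h$) comes from the quotient topology on $Z$; your universal-property phrasing merely packages the paper's explicit computations $g^{-1}(U)=f^{-1}\bigl(h^{-1}(U)\bigr)$ and $g^{-1}(h(U))=f^{-1}(U)$. Your explicit observation that surjectivity of $g$ (hence of $f$) is needed is a worthwhile touch, since the paper's step $f\bigl(f^{-1}(h^{-1}(U))\bigr)=h^{-1}(U)$ uses it silently.
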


\begin{proof}
Assume that $U \subseteq Z$ is open. Since $g$ is continuous, $g^{-1}(U)= f^{-1}(h^{-1}(U))$ is also open. Since $f$ is open, we obtain
\[
f(g^{-1}(U))= f \left(f^{-1}(h^{-1}(U))\right)=h^{-1}(U)
\]
is open and so $h$ is continuous.

Now let $U \subseteq Y$ be open. Then,
\begin{align*}
g^{-1}(h(U)) &= \left( g^{-1}(h^{-1})^{-1}\right)(U)  \\
             &= (h^{-1}g)^{-1}(U)=f^{-1}(U)
             \end{align*}
             is open. Since $Z$ is equipped with the quotient topology for $g$, $h(U)$ is open, i.e., $h$ is open.
\end{proof}

If we take $X=\R (\A)$, $Y = \hat{\A}$, $Z= \R(\A)/ \sim$, i.e., unitary equivalence classes of railway representations, $f = \phi$, $g$ the quotient map and $h$ the map that assigns an equivalence class of irreducible representations to the equivalence class of the corresponding railway representation, then Lemma~\ref{elem} shows that the map $h$ is a canonical homeomorphism between $\hat{A}$ and $\R(\A)/ \sim$. In the sequel, we will not be distinguishing between these two spectra.

Let $X$ and $Y$ be topological spaces and let $\sigma= (\sigma_1, \sigma_2, \dots, \sigma_n)$ and $\tau=(\tau_1, \tau_2, \dots , \tau_n)$ be multivariable dynamical systems consisting of selfmaps of $X$ and $Y$ respectively. Davidson and Katsoulis \cite[Definition 3.16]{DK} define $(X, \sigma)$ and $(Y, \tau)$ to be \textit{piecewise conjugate} if there exists a homeomorphism $h \colon X\rightarrow Y$ and an open cover $\{ U_g \mid g \in S_n \}$ of $Y$ so that
\[
\tau_i (y) =h \, \sigma_{g(i)} \, h^{-1} (y), \text{ for each $y \in U_g$, $g \in S_n$ and $1\leq i\leq n$.}
\]

An (automorphic) multivariable $\ca$-dynamical system $(\A, \alpha)$ consists of a $\ca$-algebra $\A$ and $*$-automorphisms $\alpha = (\alpha_1,  \alpha_2 , \dots,\alpha )$ acting on $\A$. Any automorphism (resp. multivariable system) $\alpha$ of $\A$ induces a homeomorphism (resp. multivariable dynamical system) $\hat{\alpha}$ on $\hat{\A} $, that maps the equivalence class $[\rho]$ of a railway representation to $[\rho\alpha]$. Similarly, there is a map $\tilde{\alpha}$ on $\Prim (\A)$ that maps $\ker \pi$ to $\ker \pi \alpha$.

Below we answer affirmatively \cite[Question 3]{KK}. We will not be explaining the operator algebras appearing in the Theorem below as we do not require any of their defining properties. Instead we direct the reader to \cite[Section 2]{KK}.

\begin{thm} Let $(\A , \alpha)$ and $(\B, \beta)$ be multivariable dynamical systems consisting of $*$-automorphisms. If the associated operator algebras $\alg (\A , \alpha)$ and $ \alg (\B, \beta)$ are isometrically isomorphic as operator algebras, then the multivariable dynamical systems $ ( \Prim (\A), \tilde{\alpha}) $ and $ ( \Prim (\B), \tilde{\beta}) $ are piecewise conjugate.

\end{thm}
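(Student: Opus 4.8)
The plan is to transport the piecewise conjugacy result already known over Ernest's spectrum $\hat{\A}$ (from \cite{KK}) down to the Jacobson spectrum $\Prim(\A)$ along the canonical map $\theta$ of \eqref{theta}. By the main theorem of \cite{KK}, the hypothesis that $\alg(\A,\alpha)$ and $\alg(\B,\beta)$ are isometrically isomorphic yields a homeomorphism $h \colon \hat{\A} \to \hat{\B}$ together with an open cover $\{U_g \mid g \in S_n\}$ of $\hat{\B}$ such that $\hat\beta_i = h\,\hat\alpha_{g(i)}\,h^{-1}$ on $U_g$ for each $g$ and each $i$. So the first step is to produce a homeomorphism $\bar h \colon \Prim(\A) \to \Prim(\B)$ from $h$, using the fact that $\Prim(\A) = \hat{\A}/{\sim}$ under an appropriate equivalence relation and that $\theta$ is the corresponding quotient map (here the relation identifying $\pi, \rho$ when $\ker\pi = \ker\rho$; note $\Prim(\A)$ carries the quotient topology by the stated definition of the topology on $\hat{\A}$). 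One checks that $h$ respects this equivalence relation: since $\ker(\pi\alpha_j) = \alpha_j^{-1}(\ker\pi)$, the automorphisms $\hat\alpha_j$ descend to the $\tilde\alpha_j$ on $\Prim(\A)$, and intertwiners implemented by the $*$-isomorphism behind $h$ preserve kernels, so $h$ carries the fibres of $\theta_\A$ onto fibres of $\theta_\B$. Hence $\bar h$ is well-defined, bijective, and a homeomorphism by the same quotient-topology argument as in Lemma~\ref{elem} (or directly: $\theta_\B \circ h = \bar h \circ \theta_\A$ with $\theta_\A$ open and surjective and $\Prim(\B)$ carrying the quotient topology).

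The second step is to push the open cover forward. Set $V_g := \theta_\B(U_g) \subseteq \Prim(\B)$. Because $\theta_\B$ is an open map (as noted in the excerpt, the topology on $\hat{\B}$ forces $\theta_\B$ open) and surjective, each $V_g$ is open and $\{V_g \mid g \in S_n\}$ covers $\Prim(\B)$. The third step is to verify the intertwining relation $\tilde\beta_i = \bar h\,\tilde\alpha_{g(i)}\,\bar h^{-1}$ on $V_g$. This is a diagram chase: the squares relating $\hat\alpha_j$ to $\tilde\alpha_j$ via $\theta_\A$, relating $\hat\beta_j$ to $\tilde\beta_j$ via $\theta_\B$, and relating $h$ to $\bar h$ via the two $\theta$'s, all commute. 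Given $P \in V_g$, choose $\pi \in \hat{\B}$ with $\theta_\B(\pi) = P$; the point is that we may in fact choose such a $\pi$ lying in $U_g$ — this requires that $\theta_\B^{-1}(V_g)$ and $U_g$ have the appropriate relationship, which is where a little care is needed (see below). Once $\pi \in U_g$ is secured, applying $\theta_\B$ to the known identity $\hat\beta_i(\pi) = h\hat\alpha_{g(i)}h^{-1}(\pi)$ and commuting $\theta_\B$ past everything using the three squares gives $\tilde\beta_i(P) = \bar h\tilde\alpha_{g(i)}\bar h^{-1}(P)$, as required.

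The main obstacle is precisely the point flagged above: the open cover $\{U_g\}$ of $\hat{\B}$ need not be \emph{saturated} with respect to $\theta_\B$, so $\theta_\B^{-1}(V_g)$ can be strictly larger than $U_g$, and a priori a given $P \in V_g$ might only have preimages outside $U_g$ — in which case the intertwining identity is not directly available at those preimages. The fix is to replace $\{U_g\}$ by its saturation, or rather to argue that one \emph{can} take the cover of $\hat{\B}$ to be $\theta_\B$-saturated in the first place. Concretely: the equivalence relation underlying $\theta_\B$ is $\pi \sim \rho \iff \ker\pi = \ker\rho$, and whenever $\pi \sim \rho$ one has, for all $i$, that $\hat\beta_i(\pi) \sim \hat\beta_i(\rho)$ and $h^{-1}(\pi) \sim h^{-1}(\rho)$ (kernels are preserved), so the set of $g \in S_n$ for which the intertwining relation holds at $\pi$ depends only on $\theta_\B(\pi)$. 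Therefore we may enlarge each $U_g$ to $U_g' := \theta_\B^{-1}(\theta_\B(U_g))$: this is open (saturation of an open set under an open map, intersected appropriately — in fact $\theta_\B^{-1}(\theta_\B(U_g)) = \bigcup\{\theta_\B^{-1}(\theta_\B(\pi)) : \pi \in U_g\}$ is open since $\theta_\B$ is continuous and $\theta_\B(U_g)$ is open), still covers $\hat{\B}$, and the intertwining relation still holds on each $U_g'$ by the fibrewise-invariance just noted. Running the argument with $\{U_g'\}$ in place of $\{U_g\}$ then makes the choice of $\pi \in U_g'$ over a given $P \in V_g = \theta_\B(U_g')$ automatic, and the diagram chase goes through, completing the proof.
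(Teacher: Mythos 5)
Your proposal is correct and follows essentially the same route as the paper: invoke the piecewise conjugacy of $(\hat{\A},\hat\alpha)$ and $(\hat{\B},\hat\beta)$ from \cite[Theorem 4.9]{KK} together with the fact that the homeomorphism $h$ is induced by a $*$-isomorphism $\gamma$ (so that it descends to $\Prim$), push the cover forward along the open map $\theta$, and chase the commutative squares. The only remark worth making is that your ``main obstacle'' is illusory: for $P \in V_g = \theta_{\B}(U_g)$ a preimage in $U_g$ exists by the very definition of the image, so the identity $\tilde\beta_i(P) = \bar h\,\tilde\alpha_{g(i)}\,\bar h^{-1}(P)$ follows by evaluating at that preimage, and the entire saturation detour is unnecessary.
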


\begin{proof}
We begin by noticing that for any choice of $\ca$-algebras $\C$, $\D$ and a \break $*$-automorphism $\delta : \C \rightarrow \D$ we have a commutative diagram

\begin{equation} \label{CD}
\begin{CD}
 \hat{\C} @> \theta >>  \Prim (\C) \\
 @V\hat{\delta}  VV              @V\tilde{\delta}  VV                   \\
 \hat{\D} @> \theta>>\Prim (\D)
\end{CD}
\end{equation}
where $\theta$ is defined in (\ref{theta}).

In \cite[Theorem 4.9]{KK} we proved that if $\alg (\A , \alpha)$ and $ \alg (\B, \beta)$ are isometrically isomorphic as operator algebras, then the dynamical systems $ ( \hat{\A},\hat{ \alpha}) $ and $ ( \hat{\B}, \hat{\beta}) $ are piecewise conjugate. Furthermore, it follows from the proof of \cite[Theorem 4.9]{KK} that the homeomorphism $h$ implementing the piecewise conjugacy comes from a $*$-automorphism $\gamma: \A \rightarrow \B$, i.e., $h = \hat{\gamma}$.

Let $\{ U_g \mid g \in S_n \}$ be an open cover of $\hat{\B}$ so that
\[
\hat{\beta}_i (y)=\hat{\gamma} \, \hat{\alpha}_{g(i)}  \, \hat{\gamma}^{-1} (y), \text{ for each $y \in U_g$, $g \in S_n$ and $1\leq i\leq n$.}
\]
Since $\theta$ is open, $\{ \theta(U_g) \mid g \in S_n \}$ be an open cover of $\Prim (\B)$. Furthermore, repeated use of (\ref{CD}) for the appropriate choices of $\C$, $\D$ and $\delta$ implies that
\begin{align*}
\tilde{\beta_i}\theta (y) &=\theta \hat{\beta}_i (y)=\theta \hat{\gamma} \,
                          \hat{\alpha}_{g(i)} \, \hat{\gamma}^{-1}(y) \\
                     &=\tilde{\gamma} \theta \,\hat{\alpha}_{g(i)} \hat{\gamma}^{-1}(y)
                     =\dots= \tilde{\gamma} \, \tilde{\alpha}_{g(i)} \, \tilde{\gamma}^{-1}\theta(y).
                     \end{align*}
Hence, $ \tilde{\beta_i} \mid _{\theta(U_g)} = \tilde{\gamma} \, \tilde{\alpha}_{g(i)} \, \tilde{\gamma}^{-1}\mid _{\theta(U_g)}$ and we are done.
\end{proof}


\end{document}